	\newcommand{\N}{\ensuremath{\mathbb{N}}}
	\newcommand{\Z}{\ensuremath{\mathbb{Z}}}
	\newcommand{\C}{\ensuremath{\mathbb{C}}}
	\DeclareMathOperator{\B}{B}
	\DeclareMathOperator{\K}{K}
	\DeclareMathOperator{\Hy}{H}
	\DeclareMathOperator{\Ker}{Ker}
	\DeclareMathOperator{\im}{Im}
	\DeclareMathOperator{\tors}{tors}
	\DeclareMathOperator{\id}{id}
	\DeclareMathOperator{\et}{\text{\'et}}
	\newcommand{\Az}[1]{\mathscr{#1}}
	\DeclareMathOperator{\cl}{cl}
	\DeclareMathOperator{\RBn}{\widetilde{\beta}}
	\DeclareMathOperator{\esta}{stab}
	\DeclareMathOperator{\s}{s}
	\DeclareMathOperator{\per}{per}
	\DeclareMathOperator{\diag}{diag}
	\DeclareMathOperator{\proj}{proj}
	\DeclareMathOperator{\Tr}{T}%
	\DeclareMathOperator{\M}{M}
	\DeclareMathOperator{\U}{U}
	\DeclareMathOperator{\SU}{SU}
	\DeclareMathOperator{\PU}{PU}
	\DeclareMathOperator{\GL}{GL}
	\DeclareMathOperator{\PGL}{PGL}
	\DeclareMathOperator{\Br}{Br}
	\DeclareMathOperator{\End}{End}
	\newcommand{\tensor}{\otimes}
	\newcommand{\iso}{\cong}
	\DeclareMathOperator{\m}{mult}
	\newcommand{\mif}[3]{#1:#2 \longrightarrow #3}
\theoremstyle{plain}
\newtheorem{thm}{Theorem}[section] 
\newtheorem{theorem}[thm]{Theorem}
\newtheorem{lemma}[thm]{Lemma}
\newtheorem{corollary}[thm]{Corollary}
\newtheorem{proposition}[thm]{Proposition}
\theoremstyle{definition}
\newtheorem{definition}[thm]{Definition}
\newtheorem{notation}[thm]{Notation}
\theoremstyle{remark}
\newtheorem{remark}[thm]{Remark}
\newtheorem{claim}[thm]{Claim}
\begin{document}

\title{Decomposition of Topological Azumaya Algebras}

\author[Arcila-Maya]{Niny Arcila-Maya}

\address{Department of Mathematics, University of British Columbia, Vancouver~BC, V6T 1Z2 Canada}

\email{ninyam@math.ubc.ca}

\begin{abstract}
Let $\Az{A}$ be a topological Azumaya algebra of degree $mn$ over a CW complex $X$. We give conditions for the positive integers $m$ and $n$, and the space $X$ so that $\Az{A}$ can be decomposed as the tensor product of topological Azumaya algebras of degrees $m$ and $n$. Then we prove that if $m<n$ and the dimension of $X$ is higher than $2m+1$, $\Az{A}$ may not have such decomposition.

\smallskip
\noindent \textbf{AMS subject classification.} Primary 55P99; Secondary 55Q52, 55S45, 16H05.
\smallskip
\noindent \textbf{Keywords.} topological Azumaya algebra, projective unitary group.
\end{abstract}

\maketitle

\section{Introduction}

The classical theory of central simple algebras over a field was generalized by Azumaya \cite{Azu1951} and Auslander--Goldman \cite{AG1960} by introducing the concept of an Azumaya algebra over a local commutative ring and over an arbitrary commutative ring, respectively. This concept was generalized by Grothendieck \cite[1.1]{GroI1966} to the notion of a topological Azumaya algebra.

Grothendieck \cite[Section 2]{GroI1966} defined the notion of an Azumaya algebra over any locally-ringed topos $(X_{\et}, \Az{O}_{X})$ where  $X_{\et}$ is an \'etale topos of a scheme $X$, and the local ring is $\Az{O}_{X}$ is the structure sheaf of $X$.

\begin{definition}
A \textit{topological Azumaya algebra of degree $n$} over a topological space $X$ is a bundle of associative and unital complex algebras over $X$ that is locally isomorphic to the matrix algebra $\M_{n \times n}(\C)$ where $\C$ has its ordinary topology, \cite[1.1]{GroI1966}.
\end{definition}

Topological Azumaya algebras are classified by pointed homotopy classes of maps to $\B\PGL_{n}(\C)$, as there is a bijective correspondence
\begin{align*}\label{isoAzuPrin}
\begin{Bmatrix} 
\text{Isomorphism classes of topological}\\
\text{Azumaya algebras of degree $n$ over $X$}
\end{Bmatrix}
\leftrightarrow
\begin{Bmatrix} 
\text{Isomorphism classes of}\\
\text{principal $G$-bundles over $X$}
\end{Bmatrix},
\end{align*}
where $G$ is the topological group of automorphisms of $\M_{n \times n} (\C)$ as an algebra, \cite[8.2]{Steen1951}. The Skolem--Noether theorem asserts that this is $\PGL_{n}(\C)$; i.e., matrices acting by conjugation.

For brevity of notation we work with $\U_{n}$ instead of $\GL_{n}(\C)$. Our choice of notation does not affect our results because $\U_{n}$ included in $\GL_{n}(\C)$ as the maximal compact Lie subgroup is a deformation retract, in particular the inclusion is a weak equivalence. Hence, the homotopy type of $\U_{n}$ is that of $\GL_{n}(\C)$. The homotopy equivalence is more than an equivalence
of spaces, it upgrades to one of topological groups, hence of classifiying spaces.

Let $a$ and $m$ be positive integers. Let $\mu_{m}\subset \U_{am}$ be the cyclic subgroup of order $m$ consisting of scalar matrices $\zeta I_{am}$ for $\zeta$ an $m$-th root of unity. If we have a principal $\U_{am}/\mu_{m}$-bundle on a topological space $X$, then the quotient map $q:\U_{am}/\mu_{m}\rightarrow \PU_{am}$ gives rise by an extension of structure group to a principal $\PU_{am}$-bundle and therefore a topological Azumaya algebra of degree $am$.

The tensor product of complex algebras can be extended to topological Azumaya algebras by performing the operation fiberwise. The Brauer group of a topological space $X$ classifies topological Azumaya algebras on $X$ up to Brauer equivalence: $\Az{A}$ and $\Az{A}'$ are Brauer equivalent if there exist complex vector bundles $\Az{V}$ and $\Az{V}'$, and an isomorphism $\Az{A} \tensor \End(\Az{V}) \iso \Az{A}' \tensor \End(\Az{V}')$ of bundles of $\C$-algebras. If $X$ is a finite dimensional CW complex, then $\Br(X)\iso \Hy^{3}(X;\Z)_{\tors}$ the torsion part of the cohomology group $\Hy^{3}(X;\Z)$, \cite{GroI1966}. The order of a class $\alpha \in \Br(X)$ is called the \textit{period} of $\alpha$, and it is denoted by $\per(\alpha)$.

\begin{equation*}
\begin{tikzcd}[row sep=large,column sep=large]
& \B\U_{am}/\mu_{m} \arrow[d] &\\
X \arrow[ru, bend left, "\Az{A}"] \arrow[r,"\chi_{m}"] & \K(\Z/m,2) \arrow[r,"\RBn_{m}"]    & \K(\Z,3).
\end{tikzcd}
\end{equation*}

The \textit{Brauer class} of a map $\mif{\Az{A}}{X}{\B\U_{am}/\mu_{m}}$ is an element in $\Br(X)$ which will be denoted by $\cl(\Az{A})$.  It is defined as follows. Let  $\chi_{m}$ denote the composite of the projection of $\B\U_{am}/\mu_{m}$ on the the first non-trivial stage of its Postnikov tower, $\B\U_{am}/\mu_{m} \rightarrow \K(\Z/m,2)$, and the unreduced Bockstein map, $\RBn_{m}:\K(\Z/m,2)\rightarrow \K(\Z,3)$, as illustrated in the diagram above. Then $\cl(\Az{A})$ is equal to the composite $\RBn_{m}\circ\chi_{m}$.

\begin{remark}
For a deeper discussion on topological Azumaya algebras and the Brauer group of a topological space, we refer the reader to \cite{AWtwisted2014}.
\end{remark}

Saltman asked in \cite[page 35]{Sdiv1999} whether there is prime decomposition for Azumaya algebras under the tensor product operation, as there is for central simple algebras over a field. Antieau--Williams answered this question for topological Azumaya algebras in \cite[Corollary 1.3]{AW2x32014} by showing the following result:
\begin{theorem}
For $n>1$ an odd integer, there exist a 6-dimensional CW complex $X$ and a topological Azumaya algebra $\Az{A}$ on $X$ of degree $2n$ and period $2$ such that $\Az{A}$ has no decomposition $\Az{A}\iso\Az{A}_{2}\tensor\Az{A}_{n}$ for topological Azumaya algebras of degrees $2$ and $n$, respectively.
\end{theorem}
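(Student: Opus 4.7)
The plan is to translate the decomposition question to a lifting problem for classifying spaces, reduce it using the parity of $n$, and then exhibit a cohomological obstruction realized on a suitable six-dimensional complex.

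Topological Azumaya algebras of degree $k$ over $X$ are classified by $[X,BPU(k)]$, and fiberwise tensor product is induced by a Kronecker map $t\colon BPU(2)\times BPU(n)\to BPU(2n)$; hence $\Az{A}$ decomposes as $\Az{A}_{2}\tensor\Az{A}_{n}$ if and only if its classifying map $f\colon X\to BPU(2n)$ lifts through $t$ up to homotopy. Since $\gcd(2,n)=1$ and Brauer classes add under tensor product, any hypothetical factor $\Az{A}_{n}$ in such a decomposition would have Brauer class which is simultaneously $n$-torsion and $2$-torsion, hence trivial. Thus $\Az{A}_{n}\iso\mathrm{End}(V)$ for some rank-$n$ complex vector bundle $V$ on $X$, and the problem reduces to asking whether $\Az{A}\iso\Az{A}_{2}\tensor\mathrm{End}(V)$ for some such $V$ and some period-$2$ algebra $\Az{A}_{2}$.

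I would then compute the mod-$2$ cohomology of $BPU(k)$ through degree $6$ for $k\in\{2,n,2n\}$ using the Serre spectral sequence of $BU(k)\to BPU(k)\to K(\mathbb{Z},3)$, and track the pullback $t^{*}$. The Azumaya algebra $\mathrm{End}(V)$ is detected by the mod-$n$ reductions of the Chern classes of $V$; combined with the low-dimensional invariants of a period-$2$ algebra $\Az{A}_{2}$, these cut out a strict subset of the period-$2$ invariants available to a general degree-$2n$ algebra. The goal is to identify a class $\alpha\in H^{6}(BPU(2n);\mathbb{Z}/2)$, or an integral Bockstein thereof, that detects this gap and is not hit by $t^{*}$ when restricted to period-$2$ classes. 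With such an $\alpha$, I would take $X$ to be a two-cell complex $S^{3}\cup_{\phi}e^{6}$ with attaching map $\phi$ chosen so that the period-$2$ class in $H^{3}(X;\mathbb{Z})$ extends over the top cell to produce $f^{*}\alpha\neq 0$; the classifying map $f\colon X\to BPU(2n)$ is then built inductively up the Postnikov tower of $BPU(2n)$.

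The main obstacle will be the cohomological calculation just described: the low-degree cohomology of $BPU(2n)$ is subtle because of the $2$-torsion in the center of $U(2n)$, and identifying a specific degree-$6$ class that reliably separates period-$2$ classes factoring through $t$ from those that do not requires careful control of how the fundamental class in $H^{3}$ and the Chern classes transform under the Kronecker map. Once that class and a matching CW complex are produced, the construction of $f$ via obstruction theory and the verification of non-factorability should be routine.
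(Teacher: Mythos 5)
This statement is not proved in the paper at all: it is quoted from Antieau--Williams \cite{AW2x32014}, so there is no internal argument to compare yours against, and I can only assess the plan on its own terms. Your opening reduction is correct and is the right first move: Brauer classes add under $\tensor$, so for a period-$2$ algebra the degree-$n$ factor of any decomposition has Brauer class that is both $2$- and $n$-torsion, hence trivial, hence $\Az{A}_{n}\iso\mathrm{End}(V)$. After that, however, there are two genuine gaps. The first is concrete: the proposed test space $X=S^{3}\cup_{\phi}e^{6}$ cannot carry the algebra you need. Its cellular chain complex is concentrated in degrees $0$, $3$, $6$ with all differentials zero, so $\Hy^{3}(X;\Z)\iso\Z$ is torsion-free; since the Brauer class of an Azumaya algebra on a finite complex is a torsion class in $\Hy^{3}(X;\Z)$, every Azumaya algebra on such an $X$ has period $1$, and the hypothesis ``period $2$'' is unrealizable there. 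You need a complex with $2$-torsion in $\Hy^{3}$, e.g.\ one built over a Moore space $S^{2}\cup_{2}e^{3}$.

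The second gap is that the heart of the theorem --- naming the obstruction and proving it is realized --- is exactly what you defer as ``the main obstacle,'' so the proposal is a strategy rather than a proof. For what it is worth, the obstruction does not require the full ring structure of $H^{*}(\B\PU_{2n};\Z/2)$; it is already visible on homotopy groups in degree $6$, in the spirit of Section 2 of this paper. One has $\pi_{6}(\B\PU_{2})=\pi_{5}(\mathrm{SO}(3))\iso\Z/2$, which is torsion and therefore maps to $0$ in $\pi_{6}(\B\PU_{2n})\iso\Z$, while the degree-$n$ factor contributes through $B\mapsto I_{2}\tensor B=B^{\oplus 2}$, which by Corollary \ref{cnBS} is multiplication by $2$ on $\pi_{5}(\U_{n})\iso\Z$ for $n\geq 3$. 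So decomposable algebras land in an index-$2$ sublattice of the top Postnikov stage, and the task becomes producing a $6$-complex and a period-$2$ degree-$2n$ algebra lying outside it. Even granting this, note that exhibiting a universal class not in the image of $t^{*}$ does not by itself obstruct lifting a particular map $f$; you must run obstruction theory against the relative Postnikov tower of $t$ (as in diagram \eqref{Pt} of this paper) and show the relevant relative $k$-invariant is nonzero on your $X$ compatibly with a nonzero $2$-torsion class in $\Hy^{3}(X;\Z)$. None of these steps is carried out, so as it stands the proposal does not establish the existence claim.
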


The aim of this paper is to provide conditions on a positive integer $n$ and a topological space $X$ such that a topological Azumaya algebra of degree $n$ on $X$ has a tensor product decomposition. The main result of this paper is the following theorem:
\begin{theorem}\label{main}
Let $m$ and $n$ be positive integers such that $m$ and $n$ are relatively prime and $m<n$. Let $X$ be a CW complex such that $\dim(X)\leq 2m+1$.

If $\Az{A}$ is a topological Azumaya algebra of degree $mn$ over $X$, then there exist topological Azumaya algebras $\Az{A}_{m}$ and $\Az{A}_{n}$ of degrees $m$ and $n$, respectively, such that $\Az{A}\iso \Az{A}_{m}\tensor\Az{A}_{n}$. 
\end{theorem}

Theorem \ref{main} is a corollary of a more general result. We prove in Theorem \ref{abpq/pq} that a map $X\rightarrow \B\U_{abmn}/\mu_{mn}$ can be lifted to $\B\U_{am}/\mu_{m}\times \B\U_{bn}/\mu_{n}$ when the dimension of $X$ is less than $2am+2$, the positive integers $a$, $b$, $m$ and $n$ are such that $am$ is relatively prime to $bn$, and $am<bn$. The proof of Theorem \ref{abpq/pq} relies significantly in the description of the homomorphisms induced on homotopy groups by the $r$-fold direct sum of matrices $\mif{\oplus^{r}}{\U_{n}}{\U_{rn}}$ in the range $\{0,1,\dots,2n+1\}$. We call this set ``\textit{the stable range}'' for $\U_{n}$.

This paper is organized as follows. The second section presents preliminaries on the effect of direct sum and tensor product operations on homotopy groups of compact Lie groups related to the unitary groups $\U_{n}$. The third section is devoted to the proof of Theorem \ref{abpq/pq}. We explain in Remark \ref{notuniq} why the decomposition in Theorem \ref{main} is not unique up to isomorphism.

\subsection*{Acknowledgements}
The author would like to express her deep gratitude to Ben Williams, her thesis advisor, for having proposed this research topic, pointing out relevant references, and having devoted a great deal of time to discuss details of the research with the author.

I gratefully acknowledge the anonymous referee for reading the paper carefully and providing thoughtful comments, many of which have resulted in changes to the revised version of the manuscript.

\subsection*{Notation}
Throughout this paper, all topological spaces will have the homotopy type of a CW complex. We fix basepoints for connected topological spaces, and for topological groups we take the identities as basepoints. We write $\pi_{i}(X)$ in place of $\pi_{i}(X,x_{0})$.

\section{Stabilization of operations on \texorpdfstring{$\U_{n}$}{Un}}

Let $m, n \in \N$, we consider the following matrix operations:
\begin{enumerate}
\item The \textit{direct sum of matrices}, $\mif{\oplus}{\U_{m}\times \U_{n}}{\U_{m+n}}$ defined by 
\[
A\oplus B=
\begin{pmatrix}
A & 0\\
0 & B
\end{pmatrix}.
\]

\item The \textit{$r$-fold direct sum}, $\mif{\oplus^{r}}{\U_{n}}{\U_{rn}}$ given by $A^{\oplus r}=\underbrace{A\oplus \cdots \oplus A}_{r\text{-times}}$.

\item The \textit{tensor product of matrices}, $\mif{\otimes}{\U_{m}\times\U_{n}}{\U_{mn}}$ defined by
\[
A\tensor B =
\begin{pmatrix}
a_{11}B & \cdots & a_{1m}B\\
\vdots & \ddots & \vdots \\
a_{m1}B & \cdots & a_{mm}B
\end{pmatrix},
\]
for $A=(a_{ij}) \in \U_{m}$.

\item The \textit{$r$-fold tensor product}, $\mif{\otimes^{r}}{\U_{n}}{\U_{n^{r}}}$ given by $A^{\tensor r} =\underbrace{A\tensor \cdots \tensor A}_{r\text{-times}}$.
\end{enumerate}

The homomorphisms of homotopy groups induced by the operations above will be denoted by $\oplus_{*}$, $\oplus^{r}_{*}$, $\otimes_{*}$ and $\otimes^{r}_{*}$, respectively.

We begin by recalling low degree homotopy groups of the unitary groups and the special unitary groups. The first homotopy groups of $\U_{n}$ can be calculated by using Bott periodicity. Bott proves in \cite{bott1958} that
\[
\pi_{i}(\U_{n})\iso
\begin{cases}
0 &\text{if $i<2n$ is even,}\\
\Z &\text{if $i<2n$ is odd,}\\
\Z/n! &\text{if $i=2n$.}
\end{cases}
\]

Since $\SU_{n}$ is the universal cover of $\U_{n}$, and there is a fibration $\SU_{n}\hookrightarrow \U_{n} \xrightarrow{\det} S^{1}$, it follows that
\[
\pi_{i}(\SU_{n})\iso
\begin{cases}
0 &\text{if $i=1$,}\\
\pi_{i}(\U_{n}) &\text{otherwise.}
\end{cases}
\]

We now compute the low degree homotopy groups of $\U_{am}/\mu_{m}$ and $\SU_{am}/\mu_{m}$. As $\SU_{am}$ is a simply connected $m$-cover of $\SU_{am}/\mu_{m}$ we have
\[
\pi_{i}(\SU_{am}/\mu_{m})\iso
\begin{cases}
\Z/m &\text{if $i=1$,}\\
\pi_{i}(\SU_{am}) &\text{otherwise}.
\end{cases}
\]

All columns as well as the two top rows of diagram \eqref{hg} are short exact. The nine-lemma implies that the bottom row is also short exact.

\begin{equation}\label{hg}
\begin{tikzcd}[column sep=large]
\mu_{m} \arrow[equal,r] \arrow[hookrightarrow,d] & \mu_{m} \arrow[r] \arrow[hookrightarrow,d] & \{1\} \arrow[d]\\
\SU_{am} \arrow[hookrightarrow,r] \arrow[twoheadrightarrow,d] & \U_{am} \arrow[twoheadrightarrow,r,"\det"] \arrow[twoheadrightarrow,d] & S^{1}  \arrow[equal,d]\\
\SU_{am}/\mu_{m} \arrow[r,"i"] & \U_{am}/\mu_{m}  \arrow[r,"\det"] & S^{1} .
\end{tikzcd}
\end{equation}

Therefore, $\pi_{i}(\U_{am}/\mu_{m})\iso \pi_{i}(\SU_{am}/\mu_{m})$ for all $i>1$. It remains to compute the fundamental group of $\U_{am}/\mu_{m}$. 

By exactness of the bottom row of diagram \eqref{hg}, the induced sequence on fundamental groups is exact,
\begin{equation}\label{fgU/m}
\begin{tikzcd}
0 \arrow[r] & \pi_{1}\left(\SU_{am}/\mu_{m}\right) \arrow[r,"i_{*}"] & \pi_{1}\left(\U_{am}/\mu_{m}\right)   \arrow[r,"\det_{*}"] & \pi_{1}\left(S^{1}\right)  \arrow[r] & 0.
\end{tikzcd}
\end{equation}

The map $\det:\U_{am}\rightarrow S^{1}$ has a section $t:S^{1}\rightarrow \U_{am}$ defined by 
\[
t(\omega)=%
\begin{pmatrix}
\omega & 0 \\
0 & I_{am-1}
\end{pmatrix}.
\]
The section $t$ is one of groups; in fact $\U_{n}$ is a semi-direct
product of $S^{1}$ by $\SU_{n}$. This section induces a section of $\det:\U_{am}/\mu_{m}\rightarrow S^{1}$, which we also denote by $t$,
\begin{equation}\label{secdet}
\begin{tikzcd}[column sep=large]
1 \arrow[r] & \SU_{am}/\mu_{m} \arrow[r,"i"] & \U_{am}/\mu_{m}  \arrow[r,"\det"] \arrow[r, leftarrow, bend right=35, dotted, "t"] & S^{1}  \arrow[r] & 1.
\end{tikzcd}
\end{equation}

Since $\pi_{1}\left(S^{1}\right)\iso \Z$, sequence \eqref{fgU/m} splits. We describe $\pi_{1}\left(\U_{am}/\mu_{m}\right)$ in terms of $i_{*}:\pi_{1}\left(\SU_{am}/\mu_{m}\right) \rightarrow \pi_{1}\left(\U_{am}/\mu_{m}\right)$ and $t_{*}:\pi_{1}\left(S^{1}\right) \rightarrow \pi_{1}\left(\U_{am}/\mu_{m}\right)$ as $\pi_{1}\left(\U_{am}/\mu_{m}\right)= \im i_{*} \oplus \im t_{*} \iso \Z/m \oplus \Z$. 

\subsection{Stabilization}

Let $m,n \in \N$ and $m \leq n$. Define the map 
\begin{equation*}
\begin{tikzcd}[row sep=tiny, column sep=normal]
\s:\U_{m} \arrow[r,rightarrow] & \U_{m+n}\\
A \arrow[r,mapsto] & A\oplus I_{n}.
\end{tikzcd}
\end{equation*}

The standard inclusion of unitary groups $\U_{n} \hookrightarrow \U_{n+1}$ is $2n$-connected.  Since the map $\s$ is equal to the consecutive composite of standard inclusions, it follows that $\s$ is $2m$-connected. Hence, $\s$ induces a surjection in degree $2m$ and an isomorphism on homotopy groups in degrees less than $2m$. 

\begin{notation}
Let $\esta$ denote $\pi_{i}(\s)$, the homomorphism $\s$ induces on homotopy groups. Henceforth, the following isomorphism for $i<2m$ will be needed throughout the paper
\begin{equation}\label{ident}
\begin{tikzcd}[row sep=tiny, column sep=normal]
\esta:\pi_{i}(\U_{m}) \arrow[r,rightarrow,"\iso"] & \pi_{i}(\U_{m+n})
\end{tikzcd}
\end{equation}
to identify $\pi_{i}(\U_{m})$ with $\pi_{i}(\U_{m+n})$ for all $i<2m$.
\end{notation}

\begin{lemma}\label{techl}
Let $r:\U_{m}\rightarrow \U_{m}$ be conjugation by $P \in \U_{m}$. There is a basepoint preserving homotopy $H$ from $r$  to $\id_{\U_{m}}$ such that for all $t \in [0,1]$, $H(-,t)$ is a homomorphism.
\end{lemma}
\begin{proof}
Since $\U_{m}$ is path-connected, there exists a path $\alpha$ from $P$ to $I_{m}$ in $\U_{m}$. Define $H:\U_{m}\times [0,1] \rightarrow \U_{m}$ by $H(A,t)=\alpha(t)A\alpha(t)^{-1}$. Observe that $H(-,t):\U_{m}\to \U_{m}$, $A\mapsto H(A,t)$ is a homomorphism. Moreover, $H$ is such that 
\begin{align*}
H(I_{m},t)=I_{m}, \;\; H(A,0)=r(A) \;\text{ and }\; H(A,1)=A.
\end{align*}
Therefore, the result follows.
\end{proof}

\begin{lemma}\label{sj}
Let $n,r \in \N$. For all $j=1,\dots,r$ define $\mif{\s_{j}}{\U_{n}}{\U_{rn}}$ by 
\[
\s_{j}(A)=\diag(I_{n},\dots,I_{n},A,I_{n},\dots,I_{n}),
\]
where $A$ is in the $j$-th position. The maps $\s_{j}$ and $\s_{j+1}$ are pointed homotopic for all $j=1,\dots,r-1$.
\end{lemma}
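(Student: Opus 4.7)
The plan is to reduce each $\s_j$ to the already-understood case $\s_1$. Observe that $\s_1$ is literally the stabilization map $\esta_{n,(r-1)n} : \U_n \to \U_{rn}$ of Lemma~\ref{esta}, so $\s_1$ is $2n$-connected. For general $j$, I would write $\s_j(A) = P_j \s_1(A) P_j^{-1}$, where $P_j \in \U_{rn}$ is the block permutation matrix that exchanges the first $n \times n$ block with the $j$-th one (and is the identity elsewhere).

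Next, I would use the fact that $\U_{rn}$ is path-connected, so there is a continuous path $\gamma : [0,1] \to \U_{rn}$ with $\gamma(0) = I_{rn}$ and $\gamma(1) = P_j$. The map $H : \U_n \times [0,1] \to \U_{rn}$ defined by $H(A,t) = \gamma(t)\, \s_1(A)\, \gamma(t)^{-1}$ is then a homotopy from $\s_1$ to $\s_j$. Since homotopic maps induce the same homomorphisms on homotopy groups, $\pi_i(\s_j) = \pi_i(\s_1)$ for every $i$, and therefore $\s_j$ is $2n$-connected as well.

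I do not expect any real obstacle here; the only thing worth being careful about is to note explicitly that $\s_j$ and $\s_1$ agree up to conjugation by a permutation matrix, and that conjugation by such a matrix is null-homotopic as a self-map of $\U_{rn}$ because $\U_{rn}$ is path-connected. Everything else is a direct invocation of Lemma~\ref{esta} together with homotopy invariance of $\pi_i$.
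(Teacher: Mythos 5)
Your proof is correct and follows essentially the same route as the paper: identify $\s_1$ with $\esta_{n,(r-1)n}$ and then argue that each $\s_j$ is homotopic to $\s_1$. You simply make explicit the homotopy (conjugation by a permutation matrix contracted along a path in the connected group $\U_{rn}$) that the paper asserts without detail.
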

\begin{proof}
The block matrix
\[
P_{j}=
\begin{pmatrix}
I_{(j-1)n} &  &  &  \\
 & 0 & I_{n} & \\
 & I_{n} & 0 & \\
 & & & I_{(r-j-1)n}
\end{pmatrix}
\]
is such that $P_{j}P_{j}=I_{rn}$ for $j=1,\dots,r-1$. Moreover, if $A, B \in \U_{n}$, then
\[
P_{j}\diag(I_{n},\dots,I_{n},A,B,I_{n},\dots,I_{n})P_{j}=\diag(I_{n},\dots,I_{n},B,A,I_{n},\dots,I_{n}),
\]
where $A$ and $B$ are in positions $(j,j)$, $(j+1,j+1)$, and $(j+1,j+1)$, $(j,j)$, respectively.

From Lemma \ref{techl}, $\s_{j}$ and $\s_{j+1}$ are pointed homotopic.
\end{proof}

\begin{notation}
We call the $\s_{j}$ maps \textit{stabilization maps}. As $\s_{1}$ is equal to $\s:\U_{n} \to \U_{n+(r-1)n}$, it follows that $\s_{j}$ is $2n$-connected for all $j=1,\dots,r$. From Lemma \ref{sj} the homomorphisms induced on homotopy groups by the stabilization maps are equal, hence $\esta$ also denotes $\pi_{i}(\s_{1})=\cdots=\pi_{i}(\s_{r})$.   Thus we identify $\pi_{i}(\U_{n})$ with $\pi_{i}(\U_{rn})$ for $i<2n$ through $\esta$. The identification allows one to introduce a slight abuse of notation, namely to identify $x$ and $\esta(x)$ for $x\in \pi_{i}(\U_{n})$ and $i<2n$.
\end{notation}

\subsection{Operations}

\begin{proposition}\label{DS}
Let $i\in \N$, the homomorphism $\mif{\oplus_{*}}{\pi_{i}(\U_{m})\times \pi_{i}(\U_{n})}{\pi_{i}(\U_{m+n})}$ is given by
\[
\oplus_{*}(x,y)=\esta(x)+\esta(y)
\]
for $x \in \pi_{i}(\U_{m})$ and $y \in \pi_{i}(\U_{n})$.
\end{proposition}
\begin{proof}
It is enough to observe that the direct sum factors as
\begin{equation*}
\begin{tikzcd}[row sep=tiny, column sep=huge]
\U_{m}\times\U_{n} \arrow[r,rightarrow,"\s_{1}\times\s_{2}"] & \U_{m+n}\times\U_{m+n} \arrow[r,"\m"] &\U_{m+n}\\
(A,B) \arrow[r,mapsto] &%
\left(%
\begin{pmatrix}
A & 0\\
0 & I_{n}
\end{pmatrix},%
\begin{pmatrix}
I_{m} & 0\\
0 & B
\end{pmatrix}%
\right) \arrow[r,mapsto] &%
\begin{pmatrix}
A & 0\\
0 & B
\end{pmatrix}.
\end{tikzcd}
\end{equation*}
Thus $\oplus_{*}(x,y)=\m_{*}\circ(\esta\times\esta)(x,y)=\esta(x)+\esta(y)$, where the last equality is true by the Eckmann-Hilton argument, \cite[Theorem 1.6.8]{SpaAT2012}.
\end{proof}

\begin{corollary}\label{cDS}
If $m<n$ and  $i<2m$, then $\oplus_{*}(x,y)=x+y$ for $x \in \pi_{i}(\U_{m})$ and $y \in \pi_{i}(\U_{n})$.
\end{corollary}
\begin{proof}
Since $\s_{1}$ and $\s_{2}$ are $2m$-connected, the homomorphisms $\mif{\esta}{\pi_{i}(\U_{m})}{\pi_{i}(\U_{m+n})}$ and $\mif{\esta}{\pi_{i}(\U_{n})}{\pi_{i}(\U_{m+n})}$ are isomorphisms $i<2m$ and $i<2n$, respectively. We use these isomorphisms to identify source and target.

From Proposition \ref{DS}, $\oplus_{*}(x,y)=\esta(x)+\esta(y)=x+y$ for $i<2m$.
\end{proof}

\begin{proposition}\label{nBS}
Let $i\in \N$, the homomorphism $\mif{\oplus^{r}_{*}}{\pi_{i}(\U_{n})}{\pi_{i}(\U_{rn})}$ is given by
\[
\oplus^{r}_{*}(x)=r\esta(x)
\]
for $x \in \pi_{i}(\U_{n})$.
\end{proposition}
\begin{proof}
Let $\Delta:\U_{n} \rightarrow (\U_{n})^{\times r}$ denote the diagonal map. The $r$-block summation factors as 
\begin{equation*}
\begin{tikzcd}[row sep=tiny, column sep=normal]
\U_{n} \arrow[r,rightarrow,"\Delta"] & (\U_{n})^{\times r} \arrow[r,rightarrow,"\s_{1}\times\cdots\times\s_{r}"] & (\U_{rn})^{\times r} \arrow[r,rightarrow,"\m"] & \U_{rn} \\
A \arrow[r,mapsto] & (A,\dots,A) \arrow[r,mapsto] & \Bigl(\s_{1}(A),\dots,\s_{r}(A)\Bigr) \arrow[r,mapsto] & \s_{1}(A)\cdots \s_{r}(A).
\end{tikzcd}
\end{equation*}
By the Eckmann--Hilton argument $\mif{\m_{*}}{\pi_{i}(\U_{rn})^{r}}{\pi_{i}(\U_{rn})}$ is given by 
\begin{equation*}
\m_{*}(x_{1},\dots,x_{r})=x_{1}+\cdots+x_{r}
\end{equation*}
for $x_{j} \in \pi_{i}(\U_{rn})$ and $j=1,\dots,r$. From this $\oplus_{*}^{r}$ takes the form
\begin{equation*}
\begin{tikzcd}
\pi_{i}(\U_{n})  & x \arrow[d,mapsto] \\
\pi_{i}(\U_{n})^{\times r} \arrow[u,leftarrow,"\Delta"] & (x,\dots,x) \arrow[d,mapsto]  \\
\pi_{i}(\U_{rn}) ^{\times r} \arrow[u,leftarrow,"\esta^{\times r}"] & \bigl(\esta(x),\dots,\esta(x)\bigr) \arrow[d,mapsto]\\
\pi_{i}(\U_{rn}) \arrow[u,leftarrow,"\m_{*}"] & \underbrace{\esta(x)+\cdots+\esta(x)}_{r \text{ times}}
\end{tikzcd}
\end{equation*} 
This proves the statement.
\end{proof}

\begin{corollary}\label{cnBS}
If $i<2n$, then $\oplus_{*}^{r}(x)=rx$ for $x \in \pi_{i}(\U_{n})$.
\end{corollary} 
\begin{proof}
The homomorphism $\mif{\esta^{\times r}}{\pi_{i}(\U_{n})^{\times r}}{\pi_{i}(\U_{rn})^{\times r}}$ is an isomorphism for all $i<2n$ because so is $\mif{\esta}{\pi_{i}(\U_{n})}{\pi_{i}(\U_{rn})}$. By Proposition \ref{nBS} we conclude $\oplus_{*}^{r}(x)=r\s_{*}(x)=rx$ for $i<2n$.
\end{proof}

\begin{lemma}\label{st}
Let $L, R: \U_{m}\rightarrow \U_{mn}$ be the maps $L(A)=A\tensor I_{n}$ and $R(A)=I_{n}\tensor A$. There is a basepoint preserving homotopy $H$ from $L$  to $R$ such that for all $t \in [0,1]$, $H(-,t)$ is a homomorphism.
\end{lemma}
\begin{proof}
Let $A\in \U_{m}$.
\[
L(A)=
\begin{pmatrix}
a_{11}I_{n}&  \cdots &  a_{1m}I_{n}\\
\vdots&  \ddots &  \vdots\\
a_{m1}I_{n}&  \cdots &  a_{mm}I_{n}
\end{pmatrix}%
\quad \text{and} \quad
R(A)=
\begin{pmatrix}
A&  \cdots &  0\\
\vdots&  \ddots &  \vdots\\
0&  \cdots &  A
\end{pmatrix}=A^{\oplus n}.
\]
Let $P_{m,n}$ be the permutation matrix
\begin{align*}
P_{m,n}=[&e_{1},e_{n+1},e_{2n+1},\dots, e_{(m-1)n+1},%
		      e_{2},e_{n+2},e_{2n+2},\dots, e_{(m-1)n+2},\\%
		   &\dots,\\
		   &e_{n-1},e_{2n-1},e_{3n-1},\dots,e_{mn-1},%
		     e_{n},e_{2n},e_{3n},\dots, e_{(m-1)n},e_{mn}]
\end{align*}
where, $e_{i}$ is the $i$-th standard basis vector of $\C^{mn}$ written as a column vector. Observe that $L(A)=P_{m,n}R(A)P_{m,n}^{-1}$. The result follows from Lemma \ref{techl}.
\end{proof}

\begin{proposition}\label{TP}
Let $i\in \N$, the homomorphism $\mif{\otimes_{*}}{\pi_{i}(\U_{m})\times\pi_{i}(\U_{n})}{\pi_{i}(\U_{mn})}$ is given by 
\[
\otimes_{*}(x,y)=n\esta(x)+m\esta(y)
\]
for $x \in \pi_{i}(\U_{m})$ and $y\in \pi_{i}(\U_{n})$.
\end{proposition}
\begin{proof}
By the mixed-product property of the tensor product of matrices 
\[
A\tensor B=(A\tensor I_{n})(I_{m}\tensor B)=L(A)R(B).
\]

Lemma \ref{st} gives $L_{*}=\oplus_{*}^{n}:\pi_{i}(\U_{m})\rightarrow \pi_{i}(\U_{mn})$. Proposition \ref{nBS} now yields $\otimes_{*}(x,y)=n\esta(x)+m\esta(y)$.
\end{proof}

\begin{corollary}\label{cTP}
If $m<n$ and $i<2m$, then $\otimes_{*}(x,y)=nx+my$ for $x \in \pi_{i}(\U_{m})$ and $y\in \pi_{i}(\U_{n})$.
\end{corollary}
\begin{proof}
The statement follows from Corollary \ref{cnBS} and Proposition \ref{TP}.
\end{proof}

\begin{proposition}\label{nBT}
Let $i\in \N$, the homomorphism $\mif{\otimes^{r}_{*}}{\pi_{i}(\U_{n})}{\pi_{i}(\U_{n^{r}})}$ is given by 
\[
\otimes^{r}_{*}(x)=rn^{r-1}\esta(x)
\]
for $x\in \pi_{i}(\U_{n})$.
\end{proposition}

\begin{corollary}\label{cnBT}
If $i<2n$, then $\otimes^{r}_{*}(x)=rn^{r-1}x$ for $x\in \pi_{i}(\U_{n})$.
\end{corollary}
\begin{proof}
Corollary \ref{cnBS} and Proposition \ref{nBT} yield the result.
\end{proof}

\subsubsection{Tensor product on the quotient}

Let $a$, $b$, $m$ and $n$ be positive integers so that $m<n$. The tensor product operation $\mif{\otimes}{\U_{am}\times\U_{bn}}{\U_{abmn}}$ sends the group $\mu_{m}\times \mu_{n}$ to $\mu_{mn}$. In consequence, the operation descends to the quotient 
\begin{equation}\label{q}
\mif{\otimes}{\U_{am}/\mu_{m}\times\U_{bn}/\mu_{n}}{\U_{abmn}/\mu_{mn}}.
\end{equation}

\begin{proposition}\label{TPi}
If $i>1$, the homomorphism 
\[
\mif{\otimes_{*}}{\pi_{i}(\U_{am}/\mu_{m})\times\pi_{i}(\U_{bn}/\mu_{n})}{\pi_{i}(\U_{abmn}/\mu_{mn})}
\]
is given by 
\[
\otimes_{*}(x,y)=bn\esta(x)+am\esta(y)
\]
for $x \in \pi_{i}(\U_{am}/\mu_{m})$ and $y \in \pi_{i}(\U_{bn}/\mu_{n})$.
\end{proposition}
\begin{proof}
There is a map of fibrations
\begin{equation}\label{mf}
\begin{tikzcd}
\mu_{m}\times\mu_{n} \arrow[r,hookrightarrow] \arrow[d,"\m"] & \U_{am}\times\U_{bn} \arrow[r] \arrow[d,"\otimes"] & \U_{am}/\mu_{m}\times\U_{bn}/\mu_{n} \arrow[d,"\otimes"]\\
\mu_{mn} \arrow[r,hookrightarrow] & \U_{abmn} \arrow[r] & \U_{abmn}/\mu_{mn}.
\end{tikzcd}
\end{equation}
From the homomorphism of long exact sequences associated to the fibrations in diagram \eqref{mf} we obtain a commutative square
\begin{equation*}
\begin{tikzcd}
\pi_{i}(\U_{am})\times\pi_{i}(\U_{bn}) \arrow[r,"\iso"] \arrow[d,"\otimes_{*}"] & \pi_{i}(\U_{am}/\mu_{m})\times\pi_{i}(\U_{bn}/\mu_{n}) \arrow[d,"\otimes_{*}"]\\
\pi_{i}(\U_{abmn}) \arrow[r,"\iso"] & \pi_{i}(\U_{abmn}/\mu_{mn}).
\end{tikzcd}
\end{equation*}
for $i>1$. This diagram and Proposition \ref{TP} gives $\otimes_{*}(x,y)=\oplus^{bn}_{*}(x)+\oplus^{am}_{*}(y)=bn\esta(x)+am\esta(y)$ for all $i>1$.
\end{proof}

In the following proposition, we identify $\pi_{1}(\U_{am}/\mu_{m})$ with $\im i_{*}\oplus \im t_{*}\iso\Z/m \oplus\Z$, where $i$ and $t$ are the maps in diagram \eqref{secdet}. We also identify $\Z/m$ and $\Z/n$ with the subgroups $\{0,n,2n,\dots,(m-1)n\}\subset \Z/mn$ and $\{0,m,2m,\dots,(n-1)m\}\subset \Z/mn$, respectively.

\begin{proposition}\label{TP1}
The homomorphism 
\[
\mif{\otimes_{*}}{\pi_{1}(\U_{am}/\mu_{m})\times\pi_{1}(\U_{bn}/\mu_{n})}{\pi_{1}(\U_{abmn}/\mu_{mn})}
\]
is given by 
\[
\otimes_{*}(\alpha+x,\beta+y)=(\alpha+\beta)+(bnx+amy)
\]
for $\alpha \in \Z/m \subset \Z/mn$, $\beta \in \Z/n \subset \Z/mn$, and $x, y \in \Z$.
\end{proposition}
\begin{proof}
Since the determinant of a tensor product is the product of powers of the determinants, we define $\phi:S^{1}\times S^{1} \rightarrow S^{1}$ by $\phi(\upsilon,\omega)=\upsilon^{bn}\omega^{am}$ so that the diagram below is a map of fibrations. 
\begin{equation*}
\begin{tikzcd}[column sep=large]
\SU_{am}/\mu_{m}\times\SU_{bn}/\mu_{n} \arrow[r,hookrightarrow,"i\times i"] \arrow[d,"\otimes"] & \U_{am}/\mu_{m}\times\U_{bn}/\mu_{n} \arrow[r,"\det\times\det"] \arrow[r, leftarrow, bend left=35, dotted, "t\times t"] \arrow[d,"\otimes"] & S^{1}\times S^{1} \arrow[d,"\phi"]\\
\SU_{abmn}/\mu_{mn} \arrow[r,hookrightarrow,"i"] & \U_{abmn}/\mu_{mn} \arrow[r,"\det"] \arrow[r, leftarrow, bend right=35, dotted, "t"] & S^{1}.
\end{tikzcd}
\end{equation*}
This map of fibrations induces a homomorphism of short exact sequences
\begin{equation}\label{hmf2}
\begin{tikzcd}[row sep=large]
\pi_{1}(\SU_{am}/\mu_{m})\times\pi_{1}(\SU_{bn}/\mu_{n}) \arrow[d,hookrightarrow,"i_{*}\times i_{*}"] \arrow[r,"\otimes_{*}"] & \pi_{1}(\SU_{abmn}/\mu_{mn}) \arrow[d,hookrightarrow,"i_{*}"] \\
 \pi_{1}(\U_{am}/\mu_{m})\times\pi_{1}(\U_{bn}/\mu_{n}) \arrow[d,twoheadrightarrow]  \arrow[r,"\otimes_{*}"]  & \pi_{1}(\U_{abmn}/\mu_{mn}) \arrow[d,twoheadrightarrow] \arrow[d, leftarrow, bend left=45, dotted, "t_{*}"]\\
\pi_{1}(S^{1})\times\pi_{1}(S^{1}) \arrow[u, bend left=45, dotted, "t_{*}\times t_{*}"] \arrow[r,"\phi_{*}"]  & \pi_{1}(S^{1}).
\end{tikzcd}
\end{equation}

We want to determine the homomorphism $\tensor_{*}$ in the middle of diagram \eqref{hmf2}. In order to do this, we will determine $\mif{\otimes_{*}}{\pi_{1}(\SU_{am}/\mu_{m})\times\pi_{1}(\SU_{bn}/\mu_{n})}{\pi_{1}(\SU_{abmn}/\mu_{mn})}$, and show that the short exact sequences in diagram \eqref{hmf2} split compatibly so that $\mif{\otimes_{*}}{\pi_{1}(\U_{am}/\mu_{m})\times\pi_{1}(\U_{bn}/\mu_{n})}{\pi_{1}(\U_{abmn}/\mu_{mn})}$ is equal to
\begin{equation*}
\begin{tikzcd}[row sep=large]
\Bigl(\pi_{1}(\SU_{am}/\mu_{m})\times\pi_{1}(\SU_{bn}/\mu_{n})\Bigr) \oplus \Bigl(\pi_{1}(S^{1})\times\pi_{1}(S^{1})\Bigr)  \arrow[d,"\otimes_{*}\;\oplus\;\phi_{*}"]\\
\pi_{1}(\SU_{abmn}/\mu_{mn}) \oplus \pi_{1}(S^{1}).
\end{tikzcd}
\end{equation*}

We begin by observing that there exists a similar map of fibrations to the one in diagram \eqref{mf}, but with the spaces $\SU_{am}$ and $\SU_{bn}$ instead of $\U_{am}$ and $\U_{bn}$, respectively. In this case we obtain the commutative square,
\begin{equation*}
\begin{tikzcd}
\pi_{1}(\SU_{am}/\mu_{m})\times\pi_{1}(\SU_{bn}/\mu_{n}) \arrow[r,"\iso"] \arrow[d,"\otimes_{*}"] & \Z/m \times \Z/n \arrow[d,"\psi"]\\
\pi_{1}(\SU_{abmn}/\mu_{mn}) \arrow[r,"\iso"] & \Z/mn,
\end{tikzcd}
\end{equation*}
where $\Z/m$ and $\Z/n$ are considered as subgroups of $\Z/mn$, and $\psi:\Z/m \times \Z/n \to \Z/mn$ is addition. From this $\otimes_{*}:\pi_{1}(\SU_{am}/\mu_{m})\times\pi_{1}(\SU_{bn}/\mu_{n})\rightarrow \pi_{1}(\SU_{abmn}/\mu_{mn})$ is equal to the addition. 

In order to prove the compatibility, we observe that even though diagram \eqref{hc} below does not commute, Claim \ref{chc} implies that it is commutative up to a pointed homotopy. Therefore, the induced diagram on homotopy groups does commute
\begin{equation*}
\begin{tikzcd}[column sep=huge]
\pi_{1}(S^{1})\times \pi_{1}(S^{1}) \arrow[r,"t_{*}\times t_{*}"] \arrow[d,"\phi_{*}"] & \pi_{1}(\U_{am}/\mu_{m})\times\pi_{1}(\U_{bn}/\mu_{n}) \arrow[d,"\otimes_{*}"] \\
\pi_{1}(S^{1}) \arrow[r,"t_{*}"] & \pi_{1}(\U_{abmn}/\mu_{mn}).
\end{tikzcd}
\end{equation*}

Consequently, the diagram below commutes
\begin{equation*}
\begin{tikzcd}
\pi_{1}(\U_{am}/\mu_{m})\times\pi_{1}(\U_{bn}/\mu_{n}) \arrow[d,"\otimes_{*}"] \arrow[r,"\iso"] &  \im (i_{*}\times i_{*}) \oplus \im (u_{*}\times u_{*}) \arrow[d,"\psi\oplus\phi_{*}"]\\
\pi_{1}(\SU_{abmn}/\mu_{mn}) \arrow[r,"\iso"] & \im i_{*} \oplus \im u_{*},
\end{tikzcd}
\end{equation*}
this is, $\otimes_{*}(\alpha+x,\beta+y)=\psi(\alpha,\beta)+\phi_{*}(x,y)$ for $\alpha+x \in \Z/m\oplus \Z$ and $\beta+y \in \Z/n\oplus \Z$. By the Eckmann-Hilton argument and Corollary \ref{cTP}, $\otimes_{*}(\alpha+x,\beta+y)=\psi(\alpha,\beta)+(bnx+amy)$.
\end{proof}

\begin{claim}\label{chc}
Diagram \eqref{hc} commutes up to a pointed homotopy,
\begin{equation}\label{hc}
\begin{tikzcd}[column sep=huge]
S^{1}\times S^{1} \arrow[r,"t\times t"] \arrow[d,"\phi"] & \U_{am}/\mu_{m}\times\U_{bn}/\mu_{n} \arrow[d,"\otimes"] \\
S^{1} \arrow[r,"t"] & \U_{abmn}/\mu_{mn}.
\end{tikzcd}
\end{equation}
\end{claim}
\begin{proof}
Consider the stabilization maps $\s_{j}:\U_{1} \to \U_{abmn}$ for $j=1,\dots,abmn$. Let $\upsilon, \omega \in S^{1}$, then
\[
t(\phi(\upsilon,\omega))=%
\begin{pmatrix}
\upsilon^{bn}\omega^{am} & 0 \\
0 & I_{abmn-1}
\end{pmatrix}%
\;\; \text{and} \;\;%
t(\upsilon)\tensor t(\omega)=%
\begin{pmatrix}
\upsilon t(\omega) & 0 & \cdots & 0\\
0 & t(\omega) & \cdots & 0 \\
\vdots & \vdots & \ddots & \vdots\\
0 & 0 & \cdots & t(\omega)
\end{pmatrix}.
\]

Observe that $t\circ \phi$ is equal to the composite
\begin{equation*}
\begin{tikzcd}[row sep=large]
S^{1}\times S^{1} \arrow[r,"\Delta \times \Delta"] & (S^{1})^{\times bn}\times (S^{1})^{\times am} \arrow[r,"g"] & \U_{abmn}^{\times bn}\times\U_{abmn}^{\times am} \arrow[r,"\m"]   & \U_{abmn}
\end{tikzcd}
\end{equation*}
where $g=(\s_{1}\times \cdots \times \s_{1},\s_{1}\times \cdots \times \s_{1})$, and $t\tensor t$ is equal to
\begin{equation*}
\begin{tikzcd}
S^{1}\times S^{1} \arrow[r,"\Delta \times \Delta"] & (S^{1})^{\times bn}\times (S^{1})^{\times am} \arrow[r,"h"] & \U_{abmn}^{\times bn}\times\U_{abmn}^{\times am} \arrow[r,"\m"]   & \U_{abmn}.
\end{tikzcd}
\end{equation*}
where $h=(\s_{1}\times \s_{2}\times \cdots \times \s_{bn},\s_{1}\times \s_{bn+1}\times \cdots \times \s_{(am-1)bn+1})$. By Lemma \ref{sj}, $t\circ \phi$ and $t\tensor t$ are pointed homotopic.
\end{proof}

\section{Proof of Theorem \ref{main}}

\begin{proposition}\label{Ttildehg}
Let $a$, $b$, $m$ and $n$ be positive integers such that $am$ and $bn$ are relatively prime and $am<bn$. Then there exist positive integers $u$ and $v$ satisfying $|vn(bn)^{n}-um(am)^{m}|=1$, so that there exist a positive integer $N$ and a homomorphism $\mif{\Tr}{\U_{am}\times \U_{bn}}{\U_{N}}$ such that 
\begin{enumerate}
\item the homomorphism $\Tr$ factors through $\mif{\widetilde{\Tr}}{\U_{am}/\mu_{m}\times \U_{bn}/\mu_{n}}{\U_{N}}$, and

\item the homomorphisms induced on homotopy groups 
\[
\mif{\widetilde{\Tr}_{i}}{\pi_{i}(\U_{am}/\mu_{m})\times \pi_{i}(\U_{bn}/\mu_{n})}{\pi_{i}(\U_{N})}
\]
are given by
\begin{equation*}
\begin{cases}
\widetilde{\Tr}_{i}(x,y)=um(am)^{m-1}x+vn(bn)^{n-1}y & \text{if $1<i<2am$,}\\
\widetilde{\Tr}_{i}(\alpha+x,\beta+y)=um(am)^{m-1}x+vn(bn)^{n-1}y & \text{if $i=1$,}
\end{cases}
\end{equation*}
where $\alpha \in \Z/m$, $\beta \in \Z/n$ and $x,y \in \Z$.
\end{enumerate}
\end{proposition}
\begin{proof}
We first construct $\Tr$.

Since $am$ and $bn$ are relatively prime, so are $m(am)^{m}$ and $n(bn)^{n}$. Hence there exist positive integers $u$ and $v$ such that $vn(bn)^{n}-um(am)^{m}=\pm1$. Let $N$ denote $u(am)^{m}+v(bn)^{n}$. We define $\Tr$ using the operations described in Section 2, as the composite
\begin{equation*}
\begin{tikzcd}[column sep=large]
\U_{am}\times\U_{bn} \arrow[r,"(\tensor^{m}\text{,}\tensor^{n})"] & \U_{(am)^{m}}\times\U_{(bn)^{n}} \arrow[r,"(\oplus^{u}\text{,}\oplus^{v})"] & \U_{u(am)^{m}}\times\U_{v(bn)^{n}} \arrow[r,"\oplus"]   & \U_{N}.
\end{tikzcd}
\end{equation*}

\begin{enumerate}
\item We must show that $\mu_{m}\times \mu_{n}$ is contained in $\Ker(\Tr)$. Let $\alpha$ and $\beta$ be $m$-th and $n$-th roots of unity, respectively. Note that the element $\bigl(\alpha I_{am},\beta I_{bn}\bigr)$ is sent to $\bigl(I_{u(am)^{m}},I_{v(bn)^{n}}\bigr)$ by $(\tensor^{m},\tensor^{n})$, hence to the identity by the composite $\Tr$ defined above.

\item We first observe that Corollaries \ref{cDS}, \ref{cnBS} and \ref{cnBT} imply 
\begin{equation*}
\begin{tikzcd}[row sep=tiny]
\Tr_{i}: \pi_{i}(\U_{m})\times\pi_{i}(\U_{n}) \arrow[r] & \pi_{i}(\U_{N})\\
\qquad(x,y) \arrow[r,mapsto] & um(am)^{m-1}x+vn(bn)^{n-1}y
\end{tikzcd}
\end{equation*}
for all $i<2am$.

From part (1) there is a map of fibrations
\begin{equation*}
\begin{tikzcd}
\mu_{m}\times\mu_{n} \arrow[r,hookrightarrow] \arrow[d] & \U_{am}\times \U_{bn} \arrow[r] \arrow[d,"\Tr"] & \U_{am}/\mu_{m}\times\U_{bn}/\mu_{n} \arrow[d,"\widetilde{\Tr}"]\\
\{I_{N}\} \arrow[r] & \U_{N} \arrow[r,equal] & \U_{N}.
\end{tikzcd}
\end{equation*}

\textbf{Case 1.} Let $i>1$. From the long exact sequence, diagram \eqref{iabove1} commutes.
\begin{equation}\label{iabove1}
\begin{tikzcd}
\pi_{i}(\U_{am})\times\pi_{i}(\U_{bn}) \arrow[r,"\iso"] \arrow[d,"\Tr_{i}"] & \pi_{i}( \U_{am}/\mu_{m})\times\pi_{i}(\U_{bn}/\mu_{n}) \arrow[d,"\widetilde{\Tr}_{i}"]\\
\pi_{i}(\U_{N}) \arrow[r,equal] & \pi_{i}(\U_{N}).
\end{tikzcd}
\end{equation}
Thus, $\widetilde{\Tr}_{i}(x,y)=\Tr_{i}(x,y)=um(am)^{m-1}x+vn(bn)^{n-1}y$ for $1<i<2m$.

\textbf{Case 2.} Let $i=1$. From the long exact sequence there is a homomorphism of short exact sequences
\begin{equation*}
\begin{tikzcd}[column sep=scriptsize]
\pi_{1}(\U_{am})\times \pi_{1}(\U_{bn}) \arrow[r,hookrightarrow] \arrow[d,"\Tr_{1}"] & \pi_{1}(\U_{am}/\mu_{m})\times\pi_{1}(\U_{bn}/\mu_{n}) \arrow[r,twoheadrightarrow] \arrow[d,"\widetilde{\Tr}_{1}"] & \Z/m\times \Z/n \arrow[d]\\
\pi_{1}(\U_{N}) \arrow[r,equal] & \pi_{1}(\U_{N}) \arrow[r] & 0.
\end{tikzcd}
\end{equation*}

The top short exact sequence splits. By direct inspection we obtain $\widetilde{\Tr}_{1}(\alpha+x,\beta+y)=\Tr_{1}(x,y)=um(am)^{m-1}x+vn(bn)^{n-1}y$.
\end{enumerate}
\end{proof}

\subsection{A left homotopy inverse}

Let $a$, $b$, $m$ and $n$ be positive integers. By applying the classifying-space functor to the homomorphism \eqref{q}, we obtain a map 
\begin{align}\label{SU}
F_{\tensor}: \B\U_{am}/\mu_{m}\times\B\U_{bn}/\mu_{n} \rightarrow \B\U_{abmn}/\mu_{mn}.
\end{align}
If we take the quotient by $\mu_{am}$ and $\mu_{bn}$ in \eqref{SU}, we write $f_{\tensor}$ instead of $F_{\tensor}$.

Let $J$ be the map
\begin{equation*}
\begin{tikzcd}[row sep=tiny]
J:\B\U_{am}/\mu_{m}\times\B\U_{bn}/\mu_{n} \arrow[r] & \B\U_{abmn}/\mu_{mn}\times\B\U_{N}\\
\qquad(x,y) \arrow[r,mapsto] & \Bigl(F_{\tensor}(x,y),\B\widetilde{\Tr}(x,y)\Bigr),
\end{tikzcd}
\end{equation*}
where the integer $N$ is the one provided by Proposition \ref{Ttildehg}.

\begin{proposition}\label{UN}
Let $a$, $b$, $m$ and $n$ be positive integers such that $am$ and $bn$ are relatively prime and $am<bn$. The map $J$ is $(2am+1)$-connected.
\end{proposition}
\begin{proof}
We want to prove that the induced homomorphism on homotopy groups
\begin{equation}\label{FB}
\begin{tikzcd}[row sep=large]
\pi_{i}(\B\U_{am}/\mu_{m})\times \pi_{i}(\B\U_{bn}/\mu_{n}) \arrow[d,"J_{i}"]\\
\pi_{i}(\B\U_{abmn}/\mu_{mn})\times\pi_{i}(\B\U_{N})
\end{tikzcd}
\end{equation}
is an isomorphism for all $i<2am+1$ and an epimorphism for $i=2am+1$.

Observe that the homotopy groups of the spaces involved are trivial in odd degrees below $2am+2$, hence it suffices to prove that $J_{i}$ is an isomorphism for all $i$ even and $i<2am+1$. 

We divide the proof into two cases.

\textbf{Case 1.} Let $i<2am+1$ and $i\neq 2$. For this case computations can be done at the level of the universal covers of the groups $\U_{am}/\mu_{m}, \U_{bn}/\mu_{n}$ and $\U_{abmn}/\mu_{mn}$. 

The homomorphism \eqref{FB} takes the form 
\[
\mif{J_{i}}{\Z\times\Z}{\Z\times\Z}.
\]
Propositions \ref{TP1} and \ref{Ttildehg} yield 
\[
J_{i}(x,y) =\Bigl(bnx+amy,um(am)^{m-1}x+vn(bn)^{n-1}y\Bigr).
\]
Thereby, the homomorphism \eqref{FB} is represented by the matrix
\[
\begin{pmatrix}
bn & am\\
m(am)^{m-1}u & n(bn)^{n-1}v
\end{pmatrix},
\]
which is invertible. This proves $J_{i}$ is an isomorphism.

\textbf{Case 2.} Let $i=2$. The homomorphism \eqref{FB} takes the form 
\[
\mif{J_{2}}{(\Z/m\oplus\Z)\times(\Z/n\oplus\Z)}{(\Z/mn\oplus\Z)\times\Z}.
\]

Propositions \ref{TP1} and \ref{Ttildehg} yield 
\begin{align*}
J_{2}(x+\alpha,y+\beta) &= \Bigl(\psi(\alpha,\beta)+(bnx+amy),um(am)^{m-1}x+vn(bn)^{n-1}y\Bigr).
\end{align*}

Recall that $\psi:\Z/m \times \Z/n \to \Z/mn$ is addition where $\Z/m$ and $\Z/n$ are considered as subgroups of $\Z/mn$, see proof of Proposition \ref{TP1}. The homomorphism $\psi$ is an isomorphism. From this and the invertibility of the matrix above, $J_{2}$ is an isomorphism.
\end{proof}
\subsection{Factorization through \texorpdfstring{$F_{\tensor}: \B\U_{am}/\mu_{m}\times\B\U_{bn}/\mu_{b} \rightarrow \B\U_{abmn}/\mu_{mn}$}{F:BUam/m x BUbn/n-->BUabmn/mn}}

\begin{theorem}\label{abpq/pq}
Let $a$, $b$, $m$ and $n$ be positive integers such that $am$ and $bn$ are relatively prime and $am<bn$. Let $X$ be a topological space with the homotopy type of a finite dimensional CW complex such that $\dim(X)\leq 2am+1$.

Every map $\Az{A}:X \rightarrow \B\U_{abmn}/\mu_{mn}$ can be lifted to $\B\U_{am}/\mu_{m}\times\B\U_{bn}/\mu_{n}$ along the map $F_{\tensor}$  up to a pointed homotopy.
\end{theorem}
\begin{proof}
Diagramatically speaking, we want to find a map 
\[
\Az{A}_{m}\times\Az{A}_{n}:X \rightarrow \B\U_{am}/\mu_{m}\times\B\U_{bn}/\mu_{n}
\]
 such that diagram \eqref{lpF} commutes up to homotopy
\begin{equation}\label{lpF}
\begin{tikzcd}[execute at begin picture={\useasboundingbox (-4.5,-1) rectangle (4.5,1);},row sep=large,column sep=huge]
& \B\U_{am}/\mu_{m}\times\B\U_{bn}/\mu_{n} \arrow[d,"F_{\tensor}"] \\
 X \arrow[r,"\Az{A}"] \arrow[ur,dotted,bend left=20,"\Az{A}_{m}\times\Az{A}_{n}"]  & \B\U_{abmn}/\mu_{mn}.
\end{tikzcd}
\end{equation}

Proposition \ref{UN} yields a map $\mif{J}{\B\U_{am}/\mu_{m}\times\B\U_{bn}/\mu_{n}}{\B\U_{abmn}/\mu_{mn}\times\B\U_{N}}$ where $N$ is some positive integer. Observe that $F_{\tensor}$ factors through $\B\U_{abmn}/\mu_{mn}\times\B\U_{N}$, so we can write $F_{\tensor}$ as the composite of $J$ and the projection $\proj_{1}$ shown in diagram \eqref{FJ}.

\begin{equation}\label{FJ}
\begin{tikzcd}[row sep=large,column sep=huge]
\B\U_{am}/\mu_{m}\times\B\U_{bn}/\mu_{n} \arrow[r,"J"] \arrow[d,"F_{\tensor}"] & \B\U_{abmn}/\mu_{mn}\times\B\U_{N} \\
\B\U_{abmn}/\mu_{mn} \arrow[ur,leftarrow,"\proj_{1}"]& 
\end{tikzcd}
\end{equation}

Since $J$ is $(2am+1)$-connected and $\dim(X)\leq2am+1$, then by Whitehead's theorem
\[
J_{\#}:[X, \B\U_{am}/\mu_{m}\times\B\U_{bn}/\mu_{n}] \rightarrow [X,\B\U_{abmn}/\mu_{mn}\times\B\U_{N}]
\]
is a surjection, \cite[Corollary 7.6.23]{SpaAT2012}. 

Let $s$ denote a section of $\proj_{1}$. The surjectivity of $J_{\#}$ implies $s\circ \Az{A}$ has a preimage $\Az{A}_{m}\times\Az{A}_{n}:X \rightarrow \B\U_{am}/\mu_{m}\times\B\U_{bn}/\mu_{n}$ such that $J\circ (\Az{A}_{m}\times\Az{A}_{n})\simeq s\circ \Az{A}$.

The commutativity of diagram \eqref{lpF} follows from commutativity of diagram \eqref{FJ}. Thus, the result follows.
\end{proof}

\subsection{Factorization through \texorpdfstring{$f_{\tensor}: \B\PU_{am}\times\B\PU_{bn} \rightarrow \B\PU_{abmn}$}{f:BPUam x BPUbn-->BPUabmn}}

\begin{proposition}\label{Rel}
Let $X$ be a finite CW complex. Let $\alpha \in \Br(X)$ be a class of period $m$. There exists a lifting $\Az{A}'$ of $\alpha$ if and only if $\alpha$ is represented by a topological Azumaya algebra $\Az{A}$ of degree $am$.
\end{proposition}
\begin{proof}
Let $\alpha \in \Br(X)$ be a Brauer class of period $m$. There exists a lifting $\xi \in \Hy^{2}(X;\Z/m)$ such that $\RBn_{m}(\xi)=\alpha$. Diagrammatically,
\begin{equation*}
\begin{tikzcd}[row sep=large]
  & X \arrow[dl, rightarrow,"\xi" above,dotted,bend right] \arrow{d}{\alpha} & \\
 \K(\Z/m,2)\arrow[r,"\RBn_{m}"] & \K(\Z,3) \arrow{r}{\times m} & \K(\Z,3).
\end{tikzcd}
\end{equation*}
The map of fibrations below 
\begin{equation*}
\begin{tikzcd}[column sep=large]
\mu_{m} \arrow[r,hookrightarrow] \arrow[d,hookrightarrow]& \U_{am} \arrow[r,hookrightarrow] \arrow[d,equal] & \U_{am}/\mu_{m}\arrow[d,"q"]\\
S^{1} \arrow[r,hookrightarrow]  & \U_{am} \arrow[r] & \PU_{am}
\end{tikzcd}
\end{equation*}
induces a commutative diagram
\begin{equation*}\label{mapofP}
\begin{tikzcd}[column sep=large]
\B\U_{am}/\mu_{m} \arrow[r,"\B q"] \arrow{d} & \B\PU_{am} \arrow{d}\\
\K(\Z/m,2) \arrow[r,"\RBn_{m}"] & \K(\Z,3).
\end{tikzcd}
\end{equation*}

In order to prove the proposition, we show that there exists a lifting $\Az{A}'$ of $\xi$ if and only if there exists a lifting $\Az{A}$ of $\xi$, see diagram \eqref{hpPt} below. 
\begin{equation}\label{hpPt}
\begin{tikzcd}[column sep=large, row sep=large]
& \B\U_{am}/\mu_{m} \arrow[r,"\B q"] \arrow{d} & \B\PU_{am} \arrow[lldd,"\Az{A}",leftarrow,bend left=18,dotted] \arrow{d}\\
& \K(\Z/m,2) \arrow[r,equal] \arrow[d,equal] & \K(\Z/m,2) \arrow{d}{\RBn_{m}}\\
X \arrow[ruu,"\Az{A}'", bend left,dotted] & \K(\Z/m,2) \arrow[l,leftarrow,"\xi"] & \K(\Z,3) \arrow[l,"\RBn_{m}",leftarrow]
\end{tikzcd}
\end{equation}

If there exists a lifting $\mif{\Az{A}'}{X}{\B\U_{am}/\mu_{m}}$, then the composite $\B q\circ \Az{A}'$ is a topological Azumaya algebra of degree $am$ that represents the Brauer class $\alpha$.

Conversely, suppose there exists an Azumaya algebra $\Az{A}$ of degree $am$ making the outer square in the diagram below commute up to homotopy.
\begin{equation*}
\begin{tikzcd}
X \arrow[rrd, bend left,  "\Az{A}" ] \arrow[ddr, bend right, "\xi'" left] \arrow[rd, rightarrow, "\Az{A}'",dotted] & & \\
 & \B\U_{am}/\mu_{m} \arrow[r,"\B q"] \arrow{d} & \B\PU_{am} \arrow{d}\\
 & \K(\Z/m,2) \arrow[r,"\RBn_{m}"] & \K(\Z,3) \\
\end{tikzcd}
\end{equation*}

In the inner square, the induced map on the homotopy fibers of $\B q$ and $\RBn_{m}$ is a homotopy equivalence. An application of the 5-lemma implies that the inner square is a homotopy pullback square. Therefore, there exists a lifting $\Az{A}'$ representing $\alpha$.
\end{proof}

\begin{theorem}\label{abpq}
Let $a$, $b$, $m$ and $n$ be positive integers such that $am$ and $bn$ are relatively prime and $am<bn$. Let $X$ be a CW complex such that $\dim(X)\leq 2am+1$. 

If $\Az{A}$ is a topological Azumaya algebra of degree $abmn$ such that $\cl(\Az{A})$ has period $mn$, then there exist topological Azumaya algebras $\Az{A}_{m}$ and $\Az{A}_{n}$ of degrees $am$ and $bn$, respectively, such that $\per(\cl(\Az{A}_{m}))=m$, $\per(\cl(\Az{A}_{n}))=n$ and $\Az{A}\iso \Az{A}_{m}\tensor\Az{A}_{n}$.
\end{theorem}
\begin{proof}
In this case we want to solve the lifting problem shown in diagram \eqref{MLP} up to homotopy, with $\per(\cl(\Az{A}_{m}))=m$, $\per(\cl(\Az{A}_{n}))=n$. 
\begin{equation}\label{MLP}
\begin{tikzcd}[row sep=large, column sep=huge]
 & \B\PU_{am}\times\B\PU_{bn} \arrow{d} \arrow[d,"f_{\tensor}"] \\
X \arrow[r,"\Az{A}"]  \arrow[ur,dotted,"\Az{A}_{m}\times\Az{A}_{n}"] & \B\PU_{abmn}.
\end{tikzcd}
\end{equation}

By Proposition \ref{Rel} there exists a map $\Az{A}':X \rightarrow \B\U_{abmn}/\mu_{mn}$ such that $\per(\cl(\Az{A}'))=\per(\cl(\Az{A}))=mn$. Then, by Theorem \ref{abpq/pq} there exists a map $\mif{\Az{A}'_{m}\times\Az{A}'_{n}}{X}{\B\U_{am}/\mu_{m}\times\B\U_{bn}/\mu_{n}}$ such that $F_{\tensor}\circ(\Az{A}'_{m}\times\Az{A}'_{n})\simeq\Az{A}'$.

Since $\cl(\Az{A}'_{m})\cl(\Az{A}'_{n})=\cl(\Az{A}'_{mn})$, $m$ and $n$ are relatively prime, and $\per(\cl(\Az{A}'))=mn$ then $\per(\cl(\Az{A}'_{m}))=m$ and $\per(\cl(\Az{A}'_{n}))=n$.

By Proposition \ref{Rel} there exists a map $\mif{\Az{A}_{m}\times\Az{A}_{n}}{X}{\B\PU_{am}\times\B\PU_{bn}}$ such that $\per(\cl(\Az{A}_{m}))=\per(\cl(\Az{A}'_{m}))$ and $\per(\cl(\Az{A}_{n}))=\per(\cl(\Az{A}'_{n}))$.

It remains to show that diagram (\ref{MLP}) commutes. Consider the diagram below
\begin{equation}\label{ss}
\begin{tikzcd}[row sep=normal,column sep=huge]
\B\U_{am}/\mu_{m}\times\B\U_{bn}/\mu_{n} \arrow[rd,leftarrow,"\Az{A}'_{m}\times\Az{A}'_{n}"] \arrow[rr,"\B q\times\B q"] & &\B\PU_{am}\times\B\PU_{bn}  \arrow[dd,"f_{\tensor}"]\\
& X \arrow[ru,"\Az{A}_{m}\times\Az{A}_{n}"] \arrow[rd,"\Az{A}"]& \\
\B\U_{abmn}/\mu_{mn} \arrow[uu,leftarrow,"F_{\tensor}"] \arrow[ru,leftarrow,"\Az{A}'"] \arrow[rr] & & \B\PU_{abmn}
\end{tikzcd}
\end{equation}
Observe that the square, as well as top, bottom and left triangles, of diagram \eqref{ss} commute. Hence, the right triangle commutes.
\end{proof}

Theorem \ref{main} is a corollary of Theorem \ref{abpq}.

\begin{theorem}\label{boun}
Let $a$, $b$, $m$ and $n$ be positive integers such that $am$ and $bn$ are relatively prime and $am<bn$. The map $F_{\tensor}: \B\U_{am}/\mu_{m}\times\B\U_{bn}/\mu_{n} \rightarrow \B\U_{abmn}/\mu_{mn}$ does not have any section.
\end{theorem}
\begin{proof}
Suppose there exists a section $\sigma$ of $F_{\tensor}$. 

By Proposition \ref{TP1} the map $F_{\tensor}$ induces a homomorphism on homotopy groups which is given by $(x,y)\mapsto bn\esta(x)+am\esta(y)$ for $i>2$. In degree $2am+2$ the homomorphism $(F_{\tensor})_{*}$ takes the form $(F_{\tensor})_{*}:\pi_{2am+2}(\B\U_{am}/\mu_{m})\times \Z  \rightarrow \Z$, where $\pi_{2am+2}(\B\U_{am}/\mu_{m})\iso \pi_{2am+2}(\B\U_{am})$ and $\pi_{2am+2}(\B\U_{am})$ is trivial when $am$ is odd, and $\Z/2$ when $am$ is even, see \cite[Page 971]{M1995}. Therefore, $(F_{\tensor})_{*}(x,y)=am\esta(y)$. Thus $\im(F_{\tensor})_{*}=am\Z$.

On the other side, since $\sigma$ is a section of $F_{\tensor}$, the composite $(F_{\tensor})_{*}\circ \sigma_{*}$
is the identity. This contradicts the fact that $\im( (F_{\tensor})_{*}\circ \sigma_{*}) \subset am\Z$.
\end{proof}

In Theorem \ref{main} it is proven that there exists a tensor product decomposition for topological Azumaya algebras over low dimensional CW complexes, and that such decomposition does not exist for topological Azumaya algebras over an arbitrary CW complex. The proof of Theorem \ref{boun} implies that for positive integers $m$ and $n$ where  $m<n$, if $\Az{A}$ is a topological Azumaya algebra of degree $mn$ over a finite CW complex of dimension higher than $2m+1$, then $\Az{A}$ may not be decomposable as $\Az{A}_{m}\tensor \Az{A}_{n}$. In fact, consider the unit $(2m+2)$-sphere, and let $\Az{S}:S^{2m+2}\to \B\PU_{mn}$ be a degree-$mn$ topological Azumaya algebra on $S^{2m+2}$ such that $\Az{S}$ generates $\pi_{2m+2}(\B\PU_{mn})$, then $\Az{S}$ cannot be decomposed as the tensor product of topological Azumaya algebras of degrees $m$ and $n$.

\begin{remark}\label{notuniq}
Under the hypotheses of Theorem \ref{main}, the topological Azumaya algebras $\Az{A}_{m}$ and $\Az{A}_{n}$ are not neccesarily unique up to isomorphism. In order to see this, we consider the Moore-Postnikov tower for $f_{\tensor}$:
\begin{equation*}
\begin{tikzcd}[row sep=small]
& \B\PU_{m}\times\B\PU_{n} \arrow{d} &\\
& \vdots \arrow{d} &\\
\K\bigl(\pi_{2}F,2\bigr) \arrow[r] & Y[2] \arrow{d} \\
\K\bigl(\pi_{1}F,1\bigr) \arrow[r] & Y[1]  \arrow[d]  \arrow[r,"k_{1}"] & \K(\pi_{2}F,3)\\
& \B\PU_{mn}  \arrow[r,"k_{0}"]& \K(\pi_{1}F,2),
\end{tikzcd}
\end{equation*}
where $F$ is the homotopy fiber of $f_{\tensor}$, and $k_{i-1}:Y[i-1]\rightarrow \K\bigl(\pi_{i}F,i+1\bigr)$ is the $k$-invariant that classifies the fiber sequence $Y[i] \rightarrow Y[i-1]$, for $i>0$, \cite[Theorem 4.71]{Hatch2002}.

Since the map $f_{\tensor}$ induces an isomorphism on $\pi_{2}$, and $\pi_{2i+1}$ for $0<i<m$, it follows that $\B\PU_{mn} \simeq Y[i]$ for $i=1,2,3$, and $Y[2i]\simeq Y[2i+1]$ for $1<i<m$.

The long exact sequence of $F\to \B\PU_{m}\times\B\PU_{n} \to \B\PU_{mn}$ yields 
\begin{equation*}
\pi_{i}F\iso%
\begin{cases}
0 & \text{if $i=2$ or $i$ is odd and $i<2m+1$},\\
\Z & \text{if $i\neq2$, $i$ is even and $i<2m+1$}.
\end{cases}
\end{equation*}

Hence the Moore-Postnikov tower of $f_{\tensor}$ takes the form
\begin{equation*}\label{Pt}
\begin{tikzcd}[row sep=small]
& \B\PU_{m}\times\B\PU_{n} \arrow{d} &\\
& \vdots \arrow{d} &\\
\K(\Z,6) \arrow[r] & Y[6] \arrow{d} \\
\K(\Z,4) \arrow[r] & Y[4]  \arrow[d]  \arrow[r,"k_{5}"] & \K(\Z,7)\\
& \B\PU_{mn}  \arrow[r,"k_{3}"]& \K(\Z,5)
\end{tikzcd}
\end{equation*}

Let $X$ be a CW complex of $\dim(X)\leq 6$. Let $m$ and $n$ be as in the hypothesis of Theorem \ref{main}, and $m>3$. Let $\Az{A}$ be a topological Azumaya algebra of degree $mn$.

Observe that the $k$-invariant $k_{3}$ is trivial because $\Hy^{5}(\B\PU_{mn};\Z)$ is trivial, \cite[Proposition 4.1]{AW6com2013}. Hence there is no obstruction to lift $\Az{A}$ to $Y[4]$. Similarly, we can lift the identity map $\id_{\B\PU_{mn}}$ to $Y[4]$, in this case we obtain the splitting $Y[4]\simeq \B\PU_{mn}\times\K(\Z,4)$. Then the lifting of $\Az{A}$ takes the form $(\Az{A},\xi):X\rightarrow \B\PU_{mn}\times\K(\Z,4)$.

The cohomology groups of $X$ vanish for all degrees greater than 6, given that $X$ is 6-dimensional. Thus $(\Az{A},\xi)$ can be lifted up the Moore-Postnikov tower to $\B\PU_{m}\times\B\PU_{n}$. See diagram \eqref{Pt2}.

\begin{equation}\label{Pt2}
\begin{tikzcd}[row sep=small]
&&\B\PU_{m}\times\B\PU_{n} \arrow{d} & \\
&& \vdots \arrow{d} & \\
&& Y[4]\simeq\B\PU_{mn}\times \K(\Z,4) \arrow{d} \arrow[r,"k_{5}"] & \K(\Z,7)\\
X \arrow[rr,"\Az{A}"] \arrow[rru,bend left=10,"(\Az{A}\text{,}\xi)" near end] \arrow[rruuu,bend left,"\Az{A}_{m}\times\Az{A}_{n}"] & &\B\PU_{mn} \arrow[r,"k_{3}"] & \K(\Z,5)\\
\end{tikzcd}
\end{equation}

This proves that $\Az{A}$ can be decomposed as $\Az{A}_{m}\tensor\Az{A}_{n}$. The lifting $(\Az{A},\xi)$ is not necessarily unique. In fact, every cohomology class $\xi \in \Hy^{4}(X;\Z)$ gives rise to a lifting $(\Az{A},\xi)$. 
\end{remark}

\bibliographystyle{alpha}
\bibliography{NinyDecompAzu}
\end{document}